\numberwithin{equation}{section}
\newtheorem{theorem}{Theorem}[section]
\newtheorem{lemma}[theorem]{Lemma}
\newtheorem{proposition}[theorem]{Proposition}
\newtheorem{corollary}[theorem]{Corollary}
\newtheorem{definition}[theorem]{Definition}
\newtheorem{remark}[theorem]{Remark}
\numberwithin{equation}{section}
\providecommand{\comment}[1]{\vskip.3cm
\fbox{%
\parbox{0.93\linewidth}{\footnotesize #1}}
\vskip.3cm}
\def\XXint#1#2#3{{\setbox0=\hbox{$#1{#2#3}{\int}$ }
\vcenter{\hbox{$#2#3$ }}\kern-.6\wd0}}
\newcommand{\mres}{\mathbin{\vrule height 1.6ex depth 0pt width
0.13ex\vrule height 0.13ex depth 0pt width 1.3ex}}
\newcommand{\bd}{\operatorname{BD}}
\newcommand{\bv}{\operatorname{BV}}
\newcommand{\dif}{\operatorname{d}\!}
\newcommand{\N}{\mathbb{N}}
\newcommand{\R}{\mathbb{R}}
\newcommand{\A}{\mathbb{A}}
\newcommand{\locc}{\operatorname{loc}}
\newcommand{\ball}{\operatorname{B}}
\newcommand{\sobo}{\operatorname{W}}
\newcommand{\lebe}{\operatorname{L}}
\newcommand{\hold}{\operatorname{C}}
\newcommand{\D}{\operatorname{D}\!}
\renewcommand{\leq}{\leqslant}
\newcommand{\lin}{\mathscr{L}}
\begin{document}
\title[$\lebe^{n/(n-1)}$--differentiability of $\bv^\A$--maps]{On critical $\lebe^p$--differentiability of $\bd$--maps}
\author[F. Gmeineder]{Franz Gmeineder}
\author[B. Raita]{Bogdan Raita}
\address{Franz Gmeineder: Mathematisches Institut, Universitat Bonn, Endenicher Allee 60, 53115 Bonn, Germany. Email: fgmeined@math.uni-bonn.de}
\address{Bogdan Raita: Andrew Wiles Building, University of Oxford, Woodstock Rd, Oxford OX2 6GG, United Kingdom. Email: raita@maths.ox.ac.uk}
\subjclass[2010]{Primary: 26B05; Secondary: 46E35 }
\keywords{Approximate differentiability, convolution operators, functions with bounded variation, functions with bounded deformation.}
\begin{abstract}
We prove that functions of locally bounded deformation on $\R^n$ are $\lebe^{\frac{n}{n-1}}$--differentiable $\mathcal{L}^n$--almost everywhere. More generally, we show that this critical $\lebe^p$--differentiability result holds for functions of locally bounded $\A$--variation, provided that the first order, homogeneous differential operator $\A$ has finite dimensional null--space.
\end{abstract}
\maketitle
\providecommand{\commentfranz}[1]{\comment{\textcolor{blue}{Franz:
      #1}}}
\providecommand{\commentbogdan}[1]{\comment{\textcolor{blue}{Bogdan:
        #1}}}
\section{Introduction}\label{sec:intro}
Approximate differentiability properties of weakly differentiable functions are reasonably well understood. Namely, it is well--known that maps in $\sobo_{\locc}^{1,p}(\R^n,\R^N)$ are $\lebe^{p^*}$--differentiable $\mathcal{L}^n$--a.e. in $\R^n$, where $1\leq p<n$, $p^*:=np/(n-p)$ (see, e.g., \cite[Thm~6.2]{EG}). We recall that a map $u\colon\R^n\rightarrow\R^N$ is $\lebe^q$--approximately differentiable at $x\in\R^n$ if and only if there exists a matrix $M\in\R^{N\times n}$ such that
\begin{align*}
\left(\fint_{\ball_r(x)}|u(y)-u(x)-M(y-x)|^q\dif y\right)^\frac{1}{q}=o(r)
\end{align*}
as $r\downarrow0$, whence, in particular, $u$ is approximately differentiable at $x$ with approximate gradient $M$ (see Section \ref{sec:prel} for precise definitions). For $p=1$ one can show in addition that maps $u\in\bv_{\locc}(\R^n,\R^N)$ are $\lebe^{1^*}$--differentiable $\mathcal{L}^n$--a.e. with the approximate gradient equal $\mathcal{L}^n$--a.e. to the absolutely continuous part of $\D u$ (\cite[Thm.~6.1,~6.4]{EG}). It is natural to ask a similar question of the space $\bd(\R^n)$ of functions of bounded deformation, i.e., of $\lebe^1(\R^n,\R^n)$--maps $u$ such that the symmetric part $\mathcal{E}u$ of their distributional gradient is a bounded measure. The situation in this case is significantly more complicated, since, for example, we have $\bv(\R^n,\R^n)\subsetneq\bd(\R^n)$ by the so--called Ornstein's Non--inequality \cite{CFM,KK,Ornstein}; equivalently, there are maps $u\in\bd(\R^n)$ for which the full distributional gradient $\D u$ is not a Radon measure, so one cannot easily retrieve the approximate gradient of $u$ from the absolutely continuous part of $\mathcal{E}u$ with respect to $\mathcal{L}^n$. It is however possible to recover $u$ from $\mathcal{E}u$ via convolution with a $(1-n)$--homogeneous kernel (cp. Lemma \ref{lem:representation}). \textsc{Haj\l{}asz} used this observation and a
Marcinkiewicz--type characterisation of approximate differentiability to show approximate differentiability
$\mathcal{L}^n$--a.e. of $\bd$--functions (\cite[Cor.~1]{Haj}). This result was improved in \cite[Thm.~7.4]{ACDM} to $\lebe^1$--differentiability $\mathcal{L}^n$--a.e. by \textsc{Ambrosio}, \textsc{Coscia}, and \textsc{Dal Maso}, using the precise Korn--Poincar\'e Inequality of \textsc{Kohn} \cite{bob}. It was only recently when \textsc{Alberti}, \textsc{Bianchini}, and \textsc{Crippa} generalized the approach in \cite{Haj}, obtaining $\lebe^q$--differentiability of $\bd$--maps for $1\leq q<1^*$ (see \cite[Thm.~3.4,~Prop.~4.3]{ABC}). It is, however, unclear whether the critical exponent $q=1^*$ can be reached using the Calder\'on--Zygmund--type approach in \cite{ABC}.

In the present paper, we settle the question in \cite[Rk.~4.5.(v)]{ABC} of optimal differentiability of $\bd$--maps in the positive (see Corollary \ref{cor:BD}). Although reminiscent of the elaborate estimates in \cite[Sec.~7]{ACDM}, our proof is rather straightforward. The key observation is to replace \textsc{Kohn}'s Poincar\'e--Korn Inequality with the more abstract Korn--Sobolev Inequality due to \textsc{Strang} and \textsc{Temam} \cite[Prop.~2.4]{ST}, combined with ideas developed recently by the authors in \cite{GR}. In fact, we shall prove $\lebe^{n/(n-1)}$--differentiability of maps of bounded $\A$--variation (as introduced in \cite[Sec.~2.2]{BDG}), provided that $\A$ has finite dimensional null--space.

To formally state our main result, we pause to introduce some terminology and notation. Let $\A$ be a linear, first order, homogeneous differential operator with constant coefficients on $\R^n$ from $V$ to $W$, i.e.,
\begin{align}\label{eq:A}
\A u=\sum_{j=1} A_j\partial_j u,\qquad u\colon\R^{n}\to V,
\end{align} 
where $A_j\in\lin(V,W)$ are fixed linear mappings between two finite dimensional real vector spaces $V$ and $W$. For an open set $\Omega\subset\R^n$, we define $\bv^\A(\Omega)$ as the space of $u\in\lebe^1(\Omega,V)$ such that $\A u$ is a $W$--valued Radon measure. We say that $\A$ has \emph{FDN} (finite dimensional null--space) if the vector space $\{u\in\mathscr{D}^\prime(\R^n,V)\colon\A u=0\}$ is finite dimensional. Using the main result in \cite[Thm.~1.1]{GR}, we will prove that FDN is sufficient to obtain a Korn--Sobolev--type inequality
\begin{align}\label{eq:poinc-sob}
\left(\fint_{\ball_r}|u-\pi_{\ball_r}u|^{\frac{n}{n-1}}\dif x\right)^\frac{n-1}{n}\leq c r\fint_{\ball_r}|\A u|\dif x
\end{align}
for all $u\in\hold^\infty(\bar{\ball}_r,V)$. Here $\pi$ denotes a suitable bounded projection on the null--space of $\A$, as described in \cite[Sec.~3.1]{BDG}. This is our main ingredient to prove the following:
\begin{theorem}\label{thm:main}
Let $\A$ as in \eqref{eq:A} have FDN, $u\in\bv^\A_{\locc}(\R^n)$. Then $u$ is $\lebe^{n/(n-1)}$--differentiable at $x$ for $\mathcal{L}^n$--a.e. $x\in\R^n$.
\end{theorem}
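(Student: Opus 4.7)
The plan is to combine the Korn--Sobolev inequality \eqref{eq:poinc-sob} with the $\lebe^1$--differentiability of $\bv^\A$--maps, exploiting the finite dimensionality of $N:=\ker\A$ to control the null--space projection term.

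First, I would fix $x\in\R^n$ at which each of the following three properties holds (all $\mathcal{L}^n$--a.e.): (a) $\fint_{\ball_r(x)}|\A u-(\A u)^{\mathrm{ac}}(x)|\,\dif y\to 0$ as $r\downarrow 0$, by Lebesgue differentiation on the absolutely continuous part of $\A u$ together with negligibility of its singular part; (b) $u(x)$ is the Lebesgue value of $u$; (c) $u$ is $\lebe^1$--differentiable at $x$ with approximate gradient $M_x$ satisfying $\A M_x=(\A u)^{\mathrm{ac}}(x)$. Property (c) is the technical core: for $\bv$ it is classical, for $\bd$ it is \cite[Thm.~7.4]{ACDM}, and in the general FDN setting I would expect it to follow by the same telescoping reasoning, with Kohn's inequality replaced by the $\lebe^1$--Korn--Poincar\'e inequality (a Jensen consequence of \eqref{eq:poinc-sob}), applied iteratively on dyadic balls so as to extract $M_x$ from the affine parts of the converging null--space projections.

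Next, set $v(y):=u(y)-u(x)-M_x(y-x)$, so that $\A v=\A u-(\A u)^{\mathrm{ac}}(x)$. By (a), $\fint_{\ball_r(x)}|\A v|\,\dif y=o(1)$, and applying \eqref{eq:poinc-sob} to $v$ on $\ball_r(x)$ (after standard approximation and translation of the centre) gives
\[
\Big(\fint_{\ball_r(x)}|v-\pi_{\ball_r(x)}v|^{n/(n-1)}\,\dif y\Big)^{(n-1)/n}\leq cr\fint_{\ball_r(x)}|\A v|\,\dif y=o(r).
\]
To deal with the residual $\pi_{\ball_r(x)}v\in N$, I would use that, after rescaling $\ball_r(x)$ to the unit ball, all norms on the finite--dimensional $N$ are equivalent with scale--independent constant, and that the $\lebe^2$--orthogonal projection onto $N$ admits an explicit representation in an orthonormal basis; together these give the scale--invariant bound
\[
\|\pi_{\ball_r(x)}v\|_{\lebe^\infty(\ball_r(x))}\leq C_N\fint_{\ball_r(x)}|v|\,\dif y.
\]
By (c) this is $o(r)$, whence $\big(\fint_{\ball_r(x)}|\pi_{\ball_r(x)}v|^{n/(n-1)}\,\dif y\big)^{(n-1)/n}=o(r)$; the $\lebe^{n/(n-1)}$--triangle inequality then combines both estimates to deliver $\big(\fint_{\ball_r(x)}|v|^{n/(n-1)}\,\dif y\big)^{(n-1)/n}=o(r)$, the required $\lebe^{n/(n-1)}$--differentiability of $u$ at $x$.

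The hard part will be (c), the $\lebe^1$--differentiability a.e.\ of $\bv^\A$--maps with FDN; once it is secured, the passage to the critical exponent $n/(n-1)$ is essentially soft and is precisely the reason for invoking the Sobolev refinement \eqref{eq:poinc-sob} in place of its Poincar\'e--type $\lebe^1$--consequence.
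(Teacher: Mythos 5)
Your overall architecture coincides with the paper's: pick $x$ where $u$ is $\lebe^1$--differentiable, where $\A u$ has a Lebesgue point, and where the structural identity $A(\nabla u(x))=(\A u)^{\mathrm{ac}}(x)$ holds; set $v(y)=u(y)-u(x)-\nabla u(x)(y-x)$; split $v=(v-\pi_r v)+\pi_r v$; control the first piece by the Poincar\'{e}--Sobolev inequality \eqref{eq:poinc-sob} together with Lebesgue differentiation of $\A u$; and control the projection piece by the finite dimensionality of $\ker\A$. Your scale--invariant $\lebe^\infty$ bound on $\pi_{\ball_r(x)}v$ in terms of $\fint_{\ball_r(x)}|v|$ is exactly what the paper proves in \eqref{eq:L1_proj}, via the orthonormal--basis representation of the $\lebe^2$--projection, the norm equivalence \eqref{eq:norm_equiv} on the finite--dimensional polynomial null--space (Lemma \ref{lem:FDNpoly}), and Cauchy--Schwarz.

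Where you leave a gap is your step (c), which you flag as ``the hard part'' and for which you only sketch an ACDM--style telescoping argument on dyadic balls. That detour is not carried out, and it is unclear that the delicate dyadic iteration in \cite[Sec.~7]{ACDM} transfers verbatim to general FDN operators. The paper avoids this entirely: ellipticity (which FDN implies) gives the representation $u=K^\A*\A u$ with a $(1-n)$--homogeneous $\hold^\infty(\R^n\setminus\{0\})$ kernel (Lemma \ref{lem:representation}), so the Alberti--Bianchini--Crippa theorem (Theorem \ref{thm:ABC_main}) already yields $\lebe^p$--differentiability $\mathcal{L}^n$--a.e.\ for every $p<n/(n-1)$, in particular $p=1$, with no iteration whatsoever. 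The structure identity $A(\nabla u(x))=(\A u)^{\mathrm{ac}}(x)$ is then obtained in Lemma \ref{lem:sub_crit} by a short mollification computation: one checks $\nabla u_\varepsilon(x)\to\nabla u(x)$ at $\lebe^1$--differentiability points and $\A u_\varepsilon(x)\to\A^{\mathrm{ac}}u(x)$ at Lebesgue points of $\A u$. So your decomposition and your treatment of the two terms are precisely the paper's, but you should replace the envisaged telescoping with the off--the--shelf convolution representation plus Theorem \ref{thm:ABC_main}; that closes (c) cleanly and makes the whole argument soft, as you anticipated the final passage to $n/(n-1)$ to be.
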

Our example of interest is $\bd:=\bv^\mathcal{E}$, where $\mathcal{E}u:=\left(\D u+(\D u)^\mathsf{T}\right)/2$ for $u\colon\R^n\rightarrow\R^n$. It is well known that the null--space of $\mathcal{E}$ consists of rigid motions, i.e., affine maps of anti--symmetric gradient. In particular, $\mathcal{E}$ has FDN.
\begin{corollary}\label{cor:BD}
Let $u\in\bd_{\locc}(\R^n)$. Then $u$ is $\lebe^{n/(n-1)}$--differentiable $\mathcal{L}^n$--a.e.
\end{corollary}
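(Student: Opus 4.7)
The plan is to realise Corollary \ref{cor:BD} as a direct specialisation of Theorem \ref{thm:main} to the symmetric-gradient operator $\A = \mathcal{E}$, so that the only substantive work is to verify the two structural hypotheses of the theorem. The symmetric gradient $\mathcal{E}u = (\D u + (\D u)^{\mathsf{T}})/2$ is manifestly a first-order homogeneous linear differential operator with constant coefficients acting from $V = \R^{n}$ to $W = \R^{n \times n}_{\sym}$; writing $\mathcal{E}u = \sum_{j=1}^{n} A_{j} \partial_{j} u$ with $A_{j} v = (v \otimes e_{j} + e_{j} \otimes v)/2$ displays it in the template \eqref{eq:A} with no further ado.

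The substantive step is to confirm the FDN property for $\mathcal{E}$. Suppose $u \in \mathscr{D}^{\prime}(\R^{n}, \R^{n})$ satisfies $\mathcal{E}u = 0$ in the sense of distributions. I would invoke the classical Saint-Venant identity
\[
\partial_{i} \partial_{j} u_{k} = \partial_{i}(\mathcal{E}u)_{jk} + \partial_{j}(\mathcal{E}u)_{ik} - \partial_{k}(\mathcal{E}u)_{ij},
\]
which is an algebraic consequence of $(\mathcal{E}u)_{ij} = (\partial_{i} u_{j} + \partial_{j} u_{i})/2$ and holds in $\mathscr{D}^{\prime}(\R^{n})$. It follows that $\partial_{i} \partial_{j} u_{k} = 0$ for all $i, j, k$, so each component $u_{k}$ is an affine polynomial; the residual constraint $\mathcal{E}u = 0$ then forces the linear part to be antisymmetric. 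This identifies the null-space with the space of infinitesimal rigid motions $x \mapsto b + Ax$, $b \in \R^{n}$, $A^{\mathsf{T}} = -A$, which has dimension $n(n+1)/2$. Hence FDN holds.

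With both hypotheses in hand, Theorem \ref{thm:main} applies directly to $\A = \mathcal{E}$ and delivers the claimed $\lebe^{n/(n-1)}$--differentiability $\mathcal{L}^{n}$--a.e.\ of every $u \in \bd_{\locc}(\R^{n})$. Since the two verifications are classical and essentially formal, I do not anticipate any genuine obstacle in the reduction; the real mathematical content of Corollary \ref{cor:BD} is carried entirely by Theorem \ref{thm:main} and, through it, by the Korn--Sobolev inequality \eqref{eq:poinc-sob}.
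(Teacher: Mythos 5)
Your proposal matches the paper's treatment exactly: the corollary is obtained by specialising Theorem \ref{thm:main} to $\A=\mathcal{E}$, with the only check being that $\mathcal{E}$ has FDN because its distributional null-space consists of the rigid motions. The paper simply cites this as well known, whereas you supply the standard Saint-Venant proof of it, which is correct and a reasonable amount of extra detail.
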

This paper is organized as follows: In Section \ref{sec:prel} we collect some notation and definitions, mainly those of approximate and  $\lebe^p$--differentiability, present the main result in \cite{ABC}, collect a few results on $\A$--weakly differentiable functions from \cite{BDG,GR}, and prove the inequality \eqref{eq:poinc-sob}. In Section \ref{sec:proof} we give a brief proof of Theorem \ref{thm:main}.

\subsection*{Acknowledgement} The authors wish to thank Jan Kristensen for reading a preliminary version of the paper. The second author was supported by Engineering and
Physical Sciences Research Council Award EP/L015811/1.
\section{Preliminaries}\label{sec:prel}
An operator $\A$ as in \eqref{eq:A} can also be seen as $\A u=A(\D u)$ for $u\colon\R^n\rightarrow V$, where $A\in\lin(V\otimes\R^n,W)$. We recall that such an operator has a Fourier symbol map
\begin{align*}
\A[\xi]v=\sum_{j=1}^n \xi_j A_jv,
\end{align*}
defined for $\xi\in\R^n$ and $v\in V$. An operator $\A$ is said to be \emph{elliptic} if and only if for all non--zero $\xi$, the maps $\A[\xi]\in\lin(V,W)$ are injective. By considering the maps
\begin{align*}
u_f(x):=f(x\cdot\xi)v
\end{align*}
for functions $f\in\hold^1(\R)$, it is easy to see that if $\A$ has FDN, then $\A$ is necessarily elliptic. Ellipticity is in fact equivalent with one--sided invertibility of $\A$ in Fourier space; more precisely, the equation $\A u=f$ can be uniquely solved for $u\in\mathscr{S}(\R^n,V)$ whenever $f\in\mathscr{S}(\R^n,W)\cap\mathrm{im}\A$. One has:
\begin{lemma}\label{lem:representation}
Let $\A$ be elliptic. There exists a convolution kernel $K^\A\in\hold^\infty(\R^n\setminus\{0\},\lin(W,V))$ which is $(1-n)$--homogeneous such that $u=K^\A*\A u$ for all $u\in\mathscr{S}(\R^n,V)$. 
\end{lemma}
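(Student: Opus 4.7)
The plan is to invert $\A$ in Fourier space and read off $K^\A$ as the inverse Fourier transform of a smooth, $(-1)$--homogeneous left inverse of the symbol. Ellipticity produces this inverse, and Fourier transform trades $(-1)$--homogeneity for $(1-n)$--homogeneity.

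First I would construct the symbol inverse. Since $\A[\xi]\in\lin(V,W)$ is injective for each $\xi\neq 0$, the endomorphism $\A[\xi]^{*}\A[\xi]\in\lin(V,V)$ depends polynomially on $\xi$, is $2$--homogeneous, and has non-vanishing determinant on $\R^n\setminus\{0\}$. Setting
\[
m(\xi):=(\A[\xi]^{*}\A[\xi])^{-1}\A[\xi]^{*}\in\lin(W,V),\qquad \xi\in\R^n\setminus\{0\},
\]
produces $m\in\hold^\infty(\R^n\setminus\{0\},\lin(W,V))$ which is $(-1)$--homogeneous and satisfies $m(\xi)\A[\xi]=\id_V$. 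Under the convention $\hat{u}(\xi):=\int_{\R^n}u(x)\e^{-2\pi\imag x\cdot\xi}\dif x$, the identity $\widehat{\A u}(\xi)=2\pi\imag\,\A[\xi]\hat{u}(\xi)$ combined with the previous relation yields
\[
\hat{u}(\xi)=\frac{1}{2\pi\imag}\,m(\xi)\,\widehat{\A u}(\xi)\qquad\text{for every }u\in\mathscr{S}(\R^n,V).
\]
Since $|m(\xi)|=O(|\xi|^{-1})$ is locally integrable near $0$ (using $n\geq 2$) and bounded at infinity, $m$ defines a tempered distribution, so I would set $K^\A:=\mathcal{F}^{-1}(m/(2\pi\imag))$ and apply the convolution theorem to obtain $u=K^\A*\A u$.

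The main obstacle is promoting $K^\A$ from a tempered distribution to a function that is smooth and $(1-n)$--homogeneous on $\R^n\setminus\{0\}$. For this I would invoke the classical theory of Fourier transforms of homogeneous distributions: a tempered distribution which is homogeneous of degree $-\alpha$ with $0<\alpha<n$ and smooth on $\R^n\setminus\{0\}$ has Fourier transform that is homogeneous of degree $\alpha-n$ and smooth on $\R^n\setminus\{0\}$ (see, e.g., H\"ormander, \emph{The Analysis of Linear Partial Differential Operators}, Vol.~I, \S7.1). Applying this to $m$ with $\alpha=1$ gives the desired regularity and homogeneity of $K^\A$. A self-contained alternative would decompose $m=\chi m+(1-\chi)m$ with $\chi\in\ccinfty(\R^n)$ equal to $1$ near $0$: the first piece is compactly supported and lies in $\lebe^1$, so its Fourier transform is real-analytic by Paley--Wiener; the second is a classical symbol of order $-1$, and its Fourier transform is smooth off $0$ by standard integration-by-parts arguments exploiting decay in $\xi$ for $x\neq 0$.
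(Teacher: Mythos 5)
Your construction via the Moore--Penrose left inverse $m(\xi)=(\A[\xi]^{*}\A[\xi])^{-1}\A[\xi]^{*}$ of the symbol, extension of the $(-1)$--homogeneous, locally integrable $m$ to a homogeneous tempered distribution (using $n\geq2$), and appeal to H\"ormander's theory of Fourier transforms of homogeneous distributions to obtain smoothness off the origin and $(1-n)$--homogeneity of $K^{\A}:=\mathcal{F}^{-1}\bigl(m/(2\pi\imag)\bigr)$, is correct and is essentially the same Fourier--inversion argument that the paper defers to in \cite[Lem.~2.1]{GR}.
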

For a proof of this fact, see, e.g., \cite[Lem.~2.1]{GR}. We next define, for open $\Omega\subset\R^n$ (often a ball $\ball_r(x)$), the space
\begin{align*}
\bv^\A(\Omega):=\{u\in\lebe^1(\Omega,V)\colon\A u\in\mathcal{M}(\Omega,W)\}
\end{align*}
of maps of bounded $\A$--variation, which is a Banach space under the obvious norm. By the Radon--Nikodym Theorem $\A u$ has the decomposition
\begin{align*}
\A u=\A^{ac} u \mathcal{L}^n\mres\Omega+\A^su:=\dfrac{\dif\A u}{\dif\mathcal{L}^n}\mathcal{L}^n\mres\Omega+\dfrac{\dif\A^su}{\dif|\A^su|}|\A^su|
\end{align*}
with respect to $\mathcal{L}^n$. Here $|\cdot|$ denotes the total variation semi--norm. We next see that ellipticity of $\A$ implies sub--critical $\lebe^p$--differentiability. We denote averaged integrals by $\fint_\Omega:=\mathcal{L}^n(\Omega)^{-1}\int_\Omega$ or by $(\cdot)_{x,r}$ if $\Omega=\ball_r(x)$, the ball of radius $r>0$ centred at $x\in\R^n$.
\begin{definition}
A measurable map $u\colon\R^n\rightarrow V$ is said to be 
\begin{itemize}
\item approximately differentiable at $x\in\R^n$ if there exists a matrix $M\in V\otimes\R^n$ such that
\begin{align*}
\underset{y\rightarrow x}{\mathrm{ap}\lim}\,\dfrac{|u(y)-u(x)-M(y-x)|}{|y-x|}=0;
\end{align*}
\item $\lebe^p$--differentiable at $x\in\R^n$, $1\leq p<\infty$ if there exists a matrix $M\in V\otimes\R^n$ such that
\begin{align*}
\left(\fint_{\ball_r(x)}|u(y)-u(x)-M(y-x)|^p\dif y\right)^\frac{1}{p}=o(r)
\end{align*}
as $r\downarrow0$.
\end{itemize}
We say that $\nabla u(x):=M$ is the approximate gradient of $u$ at $x$.
\end{definition}
We should also recall that 
\begin{align*}
v=\underset{y\rightarrow x}{\mathrm{ap}\lim}\,u(y)\iff \forall \varepsilon>0,\,\lim_{r\downarrow0}r^{-n}\mathcal{L}^n\left(\{y\in\ball_r(x)\colon|u(y)-v|>\varepsilon\}\right)=0,
\end{align*}
where $x\in\R^n$ and $u\colon\R^n\rightarrow V$ is measurable. In the terminology of \cite[Sec.~2.2]{ABC}, we can alternatively say that $u$ is $\lebe^p$--differentiable at $x$ if 
\begin{align}\label{eq:taylor}
u(y)=\nabla u(x)(y-x)+u(x)+R_x(y),
\end{align}
where $(|R_x|^p)_{x,r}=o(r^p)$ as $r\downarrow0$. We will refer to the decomposition \eqref{eq:taylor} as a first order $\lebe^p$--Taylor expansion of $u$ about $x$.
\begin{theorem}[{\cite[Thm.~3.4]{ABC}}]\label{thm:ABC_main}
Let $K\in\hold^2(\R^n\setminus\{0\})$ be $(1-n)$--homogeneous, and $\mu\in\mathcal{M}(\R^n)$ be a bounded measure. Then $u:=K*\mu$ is $\lebe^p$--differentiable $\mathcal{L}^n$--a.e. for all $1\leq p<n/(n-1)$.
\end{theorem}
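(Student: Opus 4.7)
The strategy is to identify, at $\mathcal{L}^n$--a.e.\ $x\in\R^n$, a first--order polynomial $P_x(y)=u(x)+M(x)(y-x)$ and show that $\big(\fint_{\ball_r(x)}|u(y)-P_x(y)|^p\,\dif y\big)^{1/p}=o(r)$ as $r\downarrow 0$. The candidate constant is the Lebesgue value of $u$, well--defined $\mathcal{L}^n$--a.e.\ since $u\in\lebe^1_{\locc}$. For the candidate gradient, observe that $\nabla u=\nabla K\ast\mu$ distributionally, and $\nabla K$ is a $(-n)$--homogeneous kernel with the standard Calder\'on--Zygmund cancellation and smoothness; hence
\begin{equation*}
M(x):=\lim_{\rho\downarrow 0}\int_{|z-x|>\rho}\nabla K(x-z)\,\dif\mu(z)
\end{equation*}
exists at $\mathcal{L}^n$--a.e.\ $x$ by classical Calder\'on--Zygmund theory for singular integrals of Radon measures.

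\textbf{Key estimate via Lorentz spaces.} The $(1-n)$--homogeneity of $K$ yields $K\in\lebe^{n/(n-1),\infty}(\R^n)$, so Young's convolution inequality in Lorentz spaces gives $\|K\ast\sigma\|_{\lebe^{n/(n-1),\infty}}\le c\,|\sigma|(\R^n)$ for any finite Radon measure $\sigma$. For $p<n/(n-1)$, Lorentz--H\"older on a ball of radius $r$ then provides
\begin{equation*}
\left(\fint_{\ball_r}|K\ast\sigma|^p\,\dif y\right)^{1/p}\le c\,r^{-(n-1)}|\sigma|(\R^n),
\end{equation*}
which converts smallness $|\sigma|(\R^n)=o(r^n)$ into an $o(r)$ bound. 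To deploy this, fix $x$ at which (i) $x$ is a Lebesgue point of $f:=\dif\mu^{ac}/\dif\mathcal{L}^n$, (ii) $|\mu^s|(\ball_r(x))/r^n\to 0$, and (iii) the principal--value $M(x)$ exists. At scale $r>0$ decompose $\mu=\mu^{\mathrm{av}}_r+\nu_r+\mu^{\mathrm{far}}_r$, where $\mu^{\mathrm{av}}_r:=f(x)\mathbf{1}_{\ball_{2r}(x)}\mathcal{L}^n$, $\nu_r:=(\mu-f(x)\mathcal{L}^n)\mathbf{1}_{\ball_{2r}(x)}$ and $\mu^{\mathrm{far}}_r:=\mu\mathbf{1}_{\R^n\setminus\ball_{2r}(x)}$. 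The choices (i)--(ii) force $|\nu_r|(\R^n)=o(r^n)$, so the displayed inequality with $\sigma=\nu_r$ yields the required $o(r)$ bound for the ``oscillatory'' piece $K\ast\nu_r$.

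\textbf{Main obstacle and conclusion.} The remaining pieces $K\ast\mu^{\mathrm{av}}_r$ and $K\ast\mu^{\mathrm{far}}_r$ are smooth on $\ball_r(x)$ (the far piece because $K\in\hold^\infty(\R^n\setminus\{0\})$, the averaged piece because $\mu^{\mathrm{av}}_r$ is bounded with compact support); each admits a pointwise first--order Taylor expansion about $x$, giving a natural scale--$r$ candidate polynomial $P^r_x$. The principal obstacle is to verify that (a) $P^r_x\to u(x)+M(x)(y-x)$ as $r\downarrow 0$, and (b) the $\lebe^p$--averaged Taylor remainders of $K\ast\mu^{\mathrm{av}}_r$ and $K\ast\mu^{\mathrm{far}}_r$ are $o(r)$. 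Assertion (a) reduces to the a.e.~convergence of Calder\'on--Zygmund truncations of $\mu$ together with the agreement of the principal--value $M(x)$ with the limit of the smooth--pieces' gradients, which is classical for $\lebe^p$--densities but requires care for general Radon measures. Assertion (b) cannot be obtained by a na\"ive $\lebe^\infty$ bound on $\nabla^2(K\ast\mu^{\mathrm{far}}_r)$, which is only $O(r^{-1})$ due to the scaling of $\nabla^2 K$ and would give $O(r)$ instead of $o(r)$; a dyadic decomposition of the complement of $\ball_{2r}(x)$, combined with averaging in $y\in\ball_r(x)$, is needed to capture the extra cancellation. Once these are in place, summing the three contributions yields $\big(\fint_{\ball_r(x)}|u-P_x|^p\,\dif y\big)^{1/p}=o(r)$ for all $p<n/(n-1)$, completing the proof.
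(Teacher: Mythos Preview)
The paper does not prove this theorem; it is quoted from \cite[Thm.~3.4]{ABC} and invoked only as a black box in the proof of Lemma~\ref{lem:sub_crit}. There is therefore no proof in the paper to compare your argument against.

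Judged on its own, your proposal is an outline in the Calder\'on--Zygmund spirit of \cite{ABC}, but it is not a proof: you explicitly flag the two hardest steps---(a) the convergence $P_x^r\to u(x)+M(x)(y-x)$ and (b) the $o(r)$ control on the far-piece Taylor remainder---and then simply assume them. Step~(b) is the genuine crux, and your hint does not resolve it. A dyadic decomposition of $\{|z-x|>2r\}$ yields, at any Lebesgue point of the density $f$ with $f(x)\neq 0$,
\begin{align*}
\sup_{y\in\ball_r(x)}\big|\nabla^2\big(K\ast\mu^{\mathrm{far}}_r\big)(y)\big|
\;\lesssim\;\sum_{k\ge 1}(2^k r)^{-(n+1)}\,|\mu|\big(\ball_{2^{k+1}r}(x)\big)\;\sim\;r^{-1},
\end{align*}
and averaging the pointwise remainder $K(y-z)-K(x-z)-\nabla K(x-z)(y-x)$ over $y\in\ball_r(x)$ produces no cancellation (it has no mean-zero property in $y$, and the $z$-integral is against an arbitrary signed measure). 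So the far piece stays at $O(r)$. The averaged piece has the \emph{same} defect: by scaling, $K\ast\mu^{\mathrm{av}}_r(y)=2r\,f(x)\,g\big((y-x)/(2r)\big)$ with $g:=K\ast\mathbf{1}_{\ball_1}$, whence $\|\nabla^2(K\ast\mu^{\mathrm{av}}_r)\|_{\lebe^\infty(\ball_r(x))}\sim r^{-1}$ and its second-order Taylor remainder is again only $O(r)$. Thus two of your three pieces are individually $O(r)$, and the required $o(r)$ must come from cancellation \emph{between} the pieces (equivalently, from matching the scale-$r$ gradients $\nabla(K\ast\mu^{\mathrm{av}}_r)(x)+\nabla(K\ast\mu^{\mathrm{far}}_r)(x)$ with the principal value $M(x)$ and controlling the mismatch). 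Your outline neither identifies this cancellation nor carries it out; as written, the argument does not close.
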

As a consequence of Lemma \ref{lem:representation} and Theorem \ref{thm:ABC_main}, we have that if $\A$ is elliptic, then maps in $\bv^\A(\R^n)$ are $\lebe^p$--differentiable $\mathcal{L}^n$--a.e. for $1\leq p<n/(n-1)$ (cp. Lemma \ref{lem:sub_crit}). Ellipticity, however, is insufficient to reach the critical exponent.  In Theorem \ref{thm:main}, we show that FDN is a sufficient condition for the critical $\lebe^{n/(n-1)}$--differentiability.
The following is essentially proved in \cite{Smith}, and is discussed at length in \cite{BDG,GR}. We will, however, sketch an elementary proof for the interested reader.
\begin{lemma}\label{lem:FDNpoly}
Let $\A$ as in \eqref{eq:A} have FDN. Then there exists $l\in\N$ such that null--space elements of $\A$ are polynomials of degree at most $l$.
\end{lemma}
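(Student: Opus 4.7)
The plan is to combine translation invariance of $N := \{u \in \mathscr{D}'(\R^n, V) : \A u = 0\}$ with a complex strengthening of the ellipticity argument sketched just above the statement. The key is to promote ellipticity to $\C$-ellipticity, after which a commuting-operator decomposition of $N$ into exponential-polynomial pieces will kill every exponential part.

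\emph{Step 1 ($\C$-ellipticity).} The paper has already observed that FDN implies $\A[\xi]$ is injective for $\xi \in \R^n \setminus \{0\}$, via the family $u_f(x) = f(x \cdot \xi)v$ with $f \in \hold^1(\R)$. Running the same construction with $\xi \in \C^n \setminus \{0\}$ and $f\colon\C \to \C$ entire, suppose $\A[\xi] v = 0$ for some $v \neq 0$. Then $u_f(x) := f(x \cdot \xi) v$ is a smooth map $\R^n \to V_{\C}$ with $\A u_f = f'(x \cdot \xi) \A[\xi] v \equiv 0$. Since the image of $x \mapsto x \cdot \xi$ contains a real line in $\C$, the identity theorem forces $f \mapsto u_f$ to be injective, producing infinitely many linearly independent elements of $N_{\C}$ and contradicting FDN. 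Hence $\A[\xi]$ is injective for every $\xi \in \C^n \setminus \{0\}$.

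\emph{Step 2 (Simultaneous eigenspace decomposition).} Elliptic regularity applied to $\A^{*}\A$ gives $N \subset \hold^\infty(\R^n, V)$. Since $\A$ has constant coefficients, each $\partial_j$ commutes with $\A$ and so preserves $N$; the $\partial_j$ also pairwise commute. Passing to the complexification $N_{\C}$, these commuting endomorphisms admit a simultaneous generalized eigenspace decomposition $N_{\C} = \bigoplus_{\zeta \in \C^n} N_\zeta$ with $(\partial_j - \zeta_j)$ nilpotent on $N_\zeta$. Setting $q := e^{-\zeta \cdot x} u$ for $u \in N_\zeta$ gives the identity $(\partial_j - \zeta_j)^k u = e^{\zeta \cdot x} \partial_j^k q$, so some $\partial_j^{k_j} q$ vanishes for each $j$; hence $q$ is a polynomial and every $u \in N_\zeta$ has the form $u = e^{\zeta \cdot x} q(x)$.

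\emph{Step 3 (Killing non-zero frequencies).} Plugging $u = e^{\zeta \cdot x} q$ into $\A u = 0$ and cancelling $e^{\zeta \cdot x}$ yields $\A q + \A[\zeta] q = 0$. Decompose $q = q_d + \cdots + q_0$ into homogeneous parts with $q_d \not\equiv 0$; since $\A$ lowers degree by one while $\A[\zeta]$ preserves it, the degree-$d$ component of the equation is $\A[\zeta] q_d = 0$. By Step 1 this forces $q_d = 0$ whenever $\zeta \neq 0$, a contradiction; thus $N_\zeta = 0$ for every $\zeta \neq 0$, so $N = N_0$ consists of $V$-valued polynomials, and finite-dimensionality of $N$ caps their degree at some $l \in \N$. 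The main obstacle is the $\C$-ellipticity upgrade in Step 1, which is the only place the full FDN hypothesis (rather than mere ellipticity) is genuinely used; the remainder is standard linear algebra of commuting operators together with a one-line degree count.
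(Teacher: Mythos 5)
Your proof is correct, and it takes a genuinely different route from the paper's sketch. The paper's argument appeals to analytic hypoellipticity for the elliptic constant--coefficient system (FDN $\Rightarrow$ ellipticity $\Rightarrow$ real--analytic solutions), Taylor--expands a null--space element $u$ about a point, and observes that since $\A$ lowers polynomial degree by one, each homogeneous Taylor piece lies in $\ker\A$; FDN then forces all but finitely many of these pieces to vanish, so $u$ is a polynomial, and finite dimension caps the degree. Your argument avoids the analyticity step (needing only $\hold^\infty$ regularity via $\A^*\A$), but instead derives $\C$--ellipticity from FDN by pushing the paper's family $u_f(x)=f(x\cdot\xi)v$ into the complex domain, and then exploits the commuting family $(\partial_j)$ on the finite--dimensional complexified null--space to extract the exponential--polynomial structure $e^{\zeta\cdot x}q(x)$; $\C$--ellipticity kills every nonzero frequency $\zeta$ via the same leading--degree observation that closes the paper's argument. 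Your route is more algebraic and makes explicit the link between FDN and $\C$--ellipticity (a characterisation going back to Smith and discussed in \cite{BDG,GR}), whereas the paper's is shorter at the cost of a heavier regularity input. One small imprecision: your closing remark that Step~1 is the \emph{only} place FDN is genuinely used beyond mere ellipticity understates matters slightly---finite--dimensionality of $N$ is also what licenses the commuting--operator decomposition in Step~2 and the final degree cap.
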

\begin{proof}[Sketch]
One can show by standard arguments that if $\A$ is elliptic and $\A u=0$ in $\mathscr{D}^\prime(\R^n,V)$, then $u$ is in fact analytic. If $u$ is not a polynomial, then one can write $u$ as an infinite sum of homogeneous polynomials and identify coefficients, thereby obtaining infinitely many linearly independent (homogeneous) polynomials in the null--space of $\A$. Then the kernel consists of polynomials, which must have a maximal degree, otherwise $\A$ fails to have FDN.
\end{proof}
We next provide a Sobolev--Poincar\'{e}--type inequality which, in the $\A$--setting, follows from the recent work \cite{GR} and is the main ingredient in the proof of Theorem~\ref{thm:main}. Following \cite[Sec.~3.1]{BDG}, we define for $\A$ with FDN, $\pi_B\colon\hold^\infty\cap\bv^\A(\ball)\rightarrow\ker\A\cap\lebe^2(\ball,V)$ as the $\lebe^2$--projection onto $\ker\A$.
\begin{proposition}[Poincar\'e--Sobolev--type Inequality]\label{prop:poinc-sob}
Let $\A$ as in \eqref{eq:A} have FDN. Then \eqref{eq:poinc-sob} holds. Moreover, there exists $c>0$ such that
\begin{align*}
\left(\fint_{\ball_r(x)}|u-\pi_{\ball_r(x)}u|^{\frac{n}{n-1}}\dif y\right)^\frac{n-1}{n}\leq c r^{1-n}|\A u|(\overline{\ball_r(x)}).
\end{align*}
for all $u\in\bv^\A_{\locc}(\R^n)$, $x\in\R^n$, $r>0$.
\end{proposition}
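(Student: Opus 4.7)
The plan is to first establish \eqref{eq:poinc-sob} on the unit ball by combining the abstract global Sobolev-type inequality of \cite[Thm.~1.1]{GR} with an extension argument, then scale up to $\ball_r(x)$, and finally pass from smooth functions to $\bv^\A$ by mollification. For the scaling, given $u \in \hold^\infty(\overline{\ball_r(x)}, V)$ set $v(y) := u(x + ry)$ for $y \in \overline{\ball_1}$, so that $\A v(y) = r (\A u)(x + ry)$. Since $\ker \A$ consists of polynomials by Lemma \ref{lem:FDNpoly} and is in particular invariant under affine rescalings, the projection $\pi_{\ball_r(x)} u$ transports to $\pi_{\ball_1} v$. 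A change of variables then shows that \eqref{eq:poinc-sob} on $\ball_r(x)$ reduces to the case $r = 1$, $x = 0$, and likewise for the \emph{Moreover} inequality using $|\A v|(\ball_1) = r^{1-n}|\A u|(\overline{\ball_r(x)})$.

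\textbf{Inequality on $\ball_1$.} Under FDN, \cite[Thm.~1.1]{GR} provides the global estimate
\begin{align*}
\|\varphi\|_{\lebe^{n/(n-1)}(\R^n)} \leq c\|\A \varphi\|_{\lebe^1(\R^n)}, \qquad \varphi \in \ccinfty(\R^n, V).
\end{align*}
To localize, I would combine this with (a) the lower-order Poincar\'e inequality $\|u - \pi_{\ball_1} u\|_{\lebe^1(\ball_1)} \leq c\|\A u\|_{\lebe^1(\ball_1)}$, which follows from the finite-dimensionality of $\ker \A$ together with the compact embedding $\bv^\A(\ball_1) \hookrightarrow \lebe^1(\ball_1)$ (itself a consequence of ellipticity and the sub-critical integrability from Theorem \ref{thm:ABC_main}), and (b) a bounded, compactly supported extension operator $\mathscr E\colon \bv^\A(\ball_1) \to \bv^\A(\R^n)$ satisfying $\|\A \mathscr E u\|_{\lebe^1(\R^n)} \leq c(\|\A u\|_{\lebe^1(\ball_1)} + \|u\|_{\lebe^1(\ball_1)})$. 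Applying the global inequality to $\mathscr E(u - \pi_{\ball_1} u)$ and restricting back to $\ball_1$ then yields \eqref{eq:poinc-sob} for $r = 1$.

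\textbf{From smooth to $\bv^\A$.} For $u \in \bv^\A_{\locc}(\R^n)$, I would mollify $u_\varepsilon := u * \rho_\varepsilon$, apply the smooth inequality on $\ball_r(x)$ to each $u_\varepsilon$, and send $\varepsilon \downarrow 0$. Standard properties of mollifiers give $\pi_{\ball_r(x)} u_\varepsilon \to \pi_{\ball_r(x)} u$ in $\lebe^1$ and $\limsup_{\varepsilon \downarrow 0} |\A u_\varepsilon|(\ball_r(x)) \leq |\A u|(\overline{\ball_r(x)})$, so that Fatou on the left yields the \emph{Moreover} inequality; the closed ball on the right absorbs possible concentration of $|\A u|$ on the sphere $\partial \ball_r(x)$.

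\textbf{Main obstacle.} The hard part is the extension step (b). For $\bv$ and $\bd$, one can use explicit reflection constructions, but for an arbitrary FDN operator $\A$ the extension must respect the differential structure of $\A$, and will likely leverage both the polynomial description of $\ker \A$ (Lemma \ref{lem:FDNpoly}) and the convolution representation $u = K^\A * \A u$ (Lemma \ref{lem:representation}).
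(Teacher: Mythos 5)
Your main obstacle is one the paper sidesteps entirely, because you have misread what \cite[Thm.~1.1]{GR} supplies. That theorem (note the title of the reference: ``Embeddings for $\A$--weakly Differentiable Functions on \emph{Domains}'') is already a domain estimate of the form
\begin{align*}
\Bigl(\int_{\ball}|u-\pi_{\ball} u|^{\frac{n}{n-1}}\,\dif y\Bigr)^{\frac{n-1}{n}}\leq c\Bigl(\int_{\ball}|\A u|\,\dif y+\int_{\ball}|u-\pi_{\ball} u|\,\dif y\Bigr),
\end{align*}
not the global Van Schaftingen--type inequality $\|\varphi\|_{\lebe^{n/(n-1)}(\R^n)}\leq c\|\A\varphi\|_{\lebe^1(\R^n)}$ on $\ccinfty(\R^n,V)$. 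Once you have the domain version, no extension operator is required: the paper simply absorbs the lower-order $\lebe^1$ term on the right via the Poincar\'e inequality $\int_\ball|u-\pi_\ball u|\,\dif y\leq c\int_\ball|\A u|\,\dif y$ from \cite[Thm.~3.2]{BDG}, then scales and translates (exactly as you propose), and passes to $\bv^\A$ by the smooth approximation theorem \cite[Thm.~2.8]{BDG}. So the two routes agree on scaling and on the smooth-to-$\bv^\A$ step, but diverge at the heart of the argument.

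Concerning the gap you flag: the extension operator in step (b) is not a minor technicality you can defer. For general elliptic first-order operators such an operator need not exist; its construction for FDN operators is itself a substantial theorem, essentially equivalent (via Smith's theorem, cf.\ Lemma~\ref{lem:FDNpoly} and \cite{Smith}) to $\C$-ellipticity and carried out in \cite{BDG}. Without invoking that result explicitly your proof is incomplete. Likewise, the Poincar\'e inequality in step (a) is asserted from ``finite-dimensionality of $\ker\A$ plus the compact embedding $\bv^\A(\ball_1)\hookrightarrow\lebe^1(\ball_1)$,'' but that compact embedding is again nontrivial for general $\A$ and is precisely what is established in \cite{BDG} under the same hypotheses. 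Both missing ingredients live in the same reference the paper cites; the cleaner move is to use the domain form of \cite[Thm.~1.1]{GR} directly, which dissolves the extension problem.
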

\begin{proof}
By smooth approximation (\cite[Thm.~2.8]{BDG}), it suffices to prove \eqref{eq:poinc-sob}. Since $\pi_{\ball_r(x)}$ is linear, we can assume that $r=1$, $x=0$. The result then follows by scaling and translation. We abbreviate $\ball:=\ball_1(0)$. By \cite[Thm.~1.1]{GR} we have that
\begin{align*}
\left(\int_{\ball}|u-\pi_{\ball} u|^{\frac{n}{n-1}}\dif y\right)^\frac{n-1}{n}\leq c \left(\int_{\ball}|\A u|+|u-\pi_{\ball}u|\dif y\right)\leq c\int_{\ball}|\A u|\dif y,
\end{align*}
where for the second estimate we use the Poincar\'e--type inequality in \cite[Thm.~3.2]{BDG}. The proof is complete.
\end{proof}
We conclude this section with a simple technical Lemma:
\begin{lemma}
Let $l\in\N$. There exists a constant $c>0$ independent of any ball $\ball\subset\R^n$ such that
\begin{align}\label{eq:norm_equiv}
\sup_{y\in\ball}|P(y)|\leq c \fint_{\ball}|P(y)|\dif y
\end{align}
for any polynomial of degree at most $l$.
\end{lemma}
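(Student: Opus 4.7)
The plan is to reduce the inequality on an arbitrary ball $\ball_r(x)$ to the inequality on the unit ball $\ball_1(0)$ via a translation and scaling, and then invoke equivalence of norms on the finite--dimensional space of polynomials of bounded degree.

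First, for the reduction step, given any ball $\ball = \ball_r(x) \subset \R^n$ and any polynomial $P$ of degree at most $l$, I would introduce $Q(z) := P(x + rz)$, which is again a polynomial of degree at most $l$ on $\R^n$. The change of variables $y = x + rz$ turns the supremum over $\ball_r(x)$ into the supremum over $\ball_1(0)$, and likewise preserves the averaged integral:
\begin{align*}
\sup_{y \in \ball_r(x)} |P(y)| = \sup_{z \in \ball_1(0)} |Q(z)|, \qquad \fint_{\ball_r(x)} |P(y)| \dif y = \fint_{\ball_1(0)} |Q(z)| \dif z.
\end{align*}
Hence it is enough to prove the inequality on the unit ball $\ball_1(0)$ with a constant depending only on $l$ and $n$.

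Second, for the inequality on $\ball_1(0)$, let $\mathcal{P}_l$ denote the (finite--dimensional) vector space of $V$--valued polynomials on $\R^n$ of degree at most $l$. Both functionals
\begin{align*}
\|P\|_\infty := \sup_{z \in \ball_1(0)} |P(z)|, \qquad \|P\|_1 := \fint_{\ball_1(0)} |P(z)| \dif z
\end{align*}
define norms on $\mathcal{P}_l$: the first is obviously a norm, while $\|P\|_1 = 0$ forces $P \equiv 0$ on a set of positive Lebesgue measure, hence identically, since a nontrivial polynomial cannot vanish on a set with interior. Since $\dim \mathcal{P}_l < \infty$, all norms on $\mathcal{P}_l$ are equivalent, so there exists a constant $c = c(l,n) > 0$ with $\|P\|_\infty \leq c \|P\|_1$ for all $P \in \mathcal{P}_l$. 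Combining this with the reduction above yields the claim.

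The argument has no serious obstacle; the only point to keep in mind is that the constant is genuinely independent of the ball, which is ensured by the scaling invariance of the inequality (both sides scale in exactly the same way under $y \mapsto x + rz$).
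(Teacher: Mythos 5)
Your proof is correct and follows essentially the same route as the paper: reduce to the unit ball by the affine change of variables $y = x + rz$, then invoke equivalence of the $\lebe^\infty$ and $\lebe^1$ norms on the finite--dimensional space of polynomials of degree at most $l$. The only cosmetic difference is that you perform the reduction first and the norm equivalence second, whereas the paper does them in the opposite order.
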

\begin{proof}
The space the polynomials of degree at most $l$ restricted on the unit ball is finite dimensional, hence the $\lebe^\infty$ and $\lebe^1$ norms are equivalent. In particular, \eqref{eq:norm_equiv} holds for $\ball=\ball_1(0)$. Consider $\ball:=\ball_r(x)$. Then
\begin{align*}
\sup_{y\in\ball}|P(y)|=\sup_{z\in\ball_1(0)}|P(x+rz)|\leq c\fint_{\ball_1(0)}|P(x+rz)|\dif z=c\fint_{\ball_r(x)}|P(y)|\dif y,
\end{align*}
since $P(x+r\,\cdot)$ are polynomials of degree at most $l$. The proof is complete.
\end{proof}
\section{Proof of Theorem \ref{thm:main}}\label{sec:proof}
We begin by proving sub--critical $\lebe^p$--differentiability of $u\in\bv^\A$ for elliptic $\A$ (cp. \cite[Thm.~5]{Haj}). We also provide a formula that enables us to retrieve the absolutely continuous part of $\A u$ from the approximate gradient. This formula respects the algebraic structure of $\A$, generalizing the result for $\bd$ in \cite[Rk.~7.5]{ACDM}.
\begin{lemma}\label{lem:sub_crit}
If $\A$ is elliptic, then any map $u\in\bv^\A(\R^n)$ is $\lebe^p$--differentiable $\mathcal{L}^n$--a.e. for all $1\leq p<n/(n-1)$. Moreover, we have that 
\begin{align}\label{eq:structure_Au}
\dfrac{\dif\A u}{\dif \mathcal{L}^n}(x)=A(\nabla u(x))
\end{align}
for $\mathcal{L}^n$--a.e $x\in\R^n$.
\end{lemma}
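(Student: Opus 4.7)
The plan is to treat the two assertions independently: the sub-critical $\lebe^p$-differentiability follows from the integral representation of Lemma~\ref{lem:representation} combined with Theorem~\ref{thm:ABC_main}, while the identity \eqref{eq:structure_Au} will be extracted from a duality argument built on the $\lebe^p$-Taylor expansion \eqref{eq:taylor}.

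For the differentiability, I would exploit that $\lebe^p$-differentiability at a point is a purely local property. Fix any ball $\ball\subset\R^n$ and a cutoff $\phi\in\ccinfty(\R^n)$ equal to one on $\ball$. The product $\phi u$ lies in $\lebe^1(\R^n,V)$ with compact support, and satisfies $\A(\phi u)=\phi\,\A u+[\A,\phi]u$, where the commutator pairs $u\in\lebe^1_{\locc}$ with bounded derivatives of $\phi$; in particular, $\A(\phi u)$ is a bounded $W$-valued Radon measure. Mollifying $\phi u$ produces compactly supported $\hold^\infty$ functions (hence Schwartz), for which Lemma~\ref{lem:representation} applies; passing to the limit, using that by ellipticity and analyticity of null-space elements no non-zero compactly supported $v$ solves $\A v=0$, yields $\phi u=K^\A*\A(\phi u)$. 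Each scalar component of $\phi u$ is therefore a convolution of a $(1-n)$-homogeneous $\hold^\infty(\R^n\setminus\{0\})$ kernel with a bounded scalar measure, so Theorem~\ref{thm:ABC_main} delivers $\lebe^p$-differentiability $\mathcal{L}^n$-a.e.\ on $\ball$ for all $1\leq p<n/(n-1)$; exhausting $\R^n$ by balls concludes this step.

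For \eqref{eq:structure_Au} I would work at a point $x$ in the full-measure intersection of the sets of $\lebe^p$-differentiability of $u$, of Lebesgue points of $\A^{ac}u$, and of zero upper $\mathcal{L}^n$-density of $|\A^su|$. Given $\varphi\in\ccinfty(\ball_1(0),W^*)$, set $\varphi_r(y):=\varphi((y-x)/r)$, noting $\spt\varphi_r\subset\ball_r(x)$ and $\|\A^*\varphi_r\|_\infty\leq C/r$, and compute $r^{-n}\mean{\A u,\varphi_r}$ in two ways. The Radon--Nikodym decomposition together with the Lebesgue point property and the vanishing of $r^{-n}|\A^su|(\ball_r(x))$ produce
\begin{align*}
\lim_{r\downarrow 0}r^{-n}\mean{\A u,\varphi_r}=\Bigmean{\A^{ac}u(x),\int_{\ball_1(0)}\varphi\,\dif z}.
\end{align*}
Dually, integrating by parts and inserting the Taylor expansion \eqref{eq:taylor}, the constant piece $u(x)$ drops because $\int \partial_j\varphi_r=0$, the remainder $R_x$ contributes $O(r^{-1})\cdot\mathcal{L}^n(\ball_r(x))\cdot o(r)=o(r^n)$, and the linear term integrates to $r^n\mean{A(\nabla u(x)),\int\varphi}$. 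Equating the two limits and letting $\varphi$ run over $\ccinfty(\ball_1(0),W^*)$ delivers the claimed identity.

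The main technical obstacle is the representation step: $\phi u$ is not Schwartz, so Lemma~\ref{lem:representation} does not apply directly, and one must justify the passage to the limit in the mollification. The two ingredients are the continuity of convolution with $K^\A$ on bounded measures of uniformly compact support, and the non-existence of compactly supported null-space elements of elliptic $\A$ noted above. Once this is in place, the identification \eqref{eq:structure_Au} reduces to standard Lebesgue-differentiation and duality, so the argument poses no further difficulty.
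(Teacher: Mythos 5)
Your proof is correct. Part (a) uses the same ingredients as the paper (Lemma~\ref{lem:representation} plus Theorem~\ref{thm:ABC_main}), but you are more careful than the paper: you explicitly address that Lemma~\ref{lem:representation} is stated only for Schwartz functions, and patch the gap by cutting off, mollifying to reach the Schwartz class, and passing to the limit via the density argument and the fact that elliptic operators admit no nontrivial compactly supported null--space elements. The paper applies $u = K^\A * \A u$ to $u\in\bv^\A(\R^n)$ without comment; your localization is a genuine tightening, and the identity $\A(\phi u)=\phi\,\A u+[\A,\phi]u$ with a zeroth--order commutator is exactly what makes it work.

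For \eqref{eq:structure_Au} your route is genuinely different in form, if not in substance. The paper mollifies $u$ and computes $\nabla u_\varepsilon(x)$ pointwise: it uses the $\lebe^1$--Taylor expansion to show $\nabla u_\varepsilon(x)\to\nabla u(x)$ with error $\varepsilon^{-1}(|R_x|)_{x,\varepsilon}$, hence $\A u_\varepsilon(x)\to A(\nabla u(x))$, and separately shows $\A u_\varepsilon=\A u*\eta_\varepsilon\to\A^{ac}u$ a.e.\ via Lebesgue differentiation and the vanishing density of $\A^s u$. You instead fix a scaled test function $\varphi_r=\varphi((\cdot-x)/r)$ and compare the two evaluations of $r^{-n}\langle\A u,\varphi_r\rangle$: on the measure side you get $\langle\A^{ac}u(x),\int\varphi\rangle$, while on the dual side the Taylor expansion (with the remainder killed by $\|\A^*\varphi_r\|_\infty\lesssim r^{-1}$ against $\int_{\ball_r}|R_x|=o(r^{n+1})$) leaves $\langle A(\nabla u(x)),\int\varphi\rangle$. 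These are transposed versions of the same computation---the paper specializes the test function to a mollifier and reads off the gradient, whereas you keep $\varphi$ general and conclude by a span argument on $\int\varphi\in W^*$. Your duality version is arguably cleaner because it avoids the detour through $\nabla u_\varepsilon(x)\to\nabla u(x)$ and lands directly on the identity to be proved; the paper's version has the mild advantage of also recording the useful auxiliary fact that $\nabla u_\varepsilon\to\nabla u$ a.e. Both are correct and complete.
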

\begin{proof}
By Lemma \ref{lem:representation}, we can write the components $u_i=K^\A_{ij}*(\A u)_j$, where summation over repeated indices is adopted. We then note that $K^\A_{ij}$ satisfies the assumptions of Theorem \ref{thm:ABC_main}, hence each component $u_i$ is $\lebe^p$--differentiable $\mathcal{L}^n$--a.e. for $1\leq p<n/(n-1)$.

We next let $u\in\bv^\A(\R^n)$ and $x\in\R^n$ be a Lebesgue point of $u$ and $\A^{ac}u$, and also a point of $\lebe^1$--differentiability of $u$. We also consider a sequence $(\eta_\varepsilon)_{\varepsilon>0}$ of standard mollifiers, i.e., $\eta_1\in\hold^\infty_c(\ball_1(0))$ is radially symmetric and has integral equal to 1 and $\eta_\varepsilon(y)=\varepsilon^{-n}\eta_1(x/\varepsilon)$. Finally, we write $u_\varepsilon:=u*\eta_\varepsilon$ and employ the Taylor expansion \eqref{eq:taylor} to compute
\begin{align*}
\nabla u_\varepsilon(x)&=\int_{\ball_\varepsilon(x)} u(y)\otimes\nabla_x\eta_\varepsilon(x-y)\dif y\\
&=-\int_{\ball_\varepsilon(x)}\left(\nabla u(x)(y-x)+u(x)+R_x(y)\right)\otimes\nabla_y\eta_\varepsilon(y-x)\dif y\\
&=\int_{\ball_\varepsilon(x)}\eta_\varepsilon(y-x)\nabla u(x)\dif y-\int_{\ball_\varepsilon(x)}R_x(y)\otimes\nabla_y\eta_\varepsilon(y-x)\dif y\\
&=\nabla u(x)+\int_{\ball_\varepsilon(x)}R_x(y)\otimes\nabla_x\eta_\varepsilon(x-y)\dif y,
\end{align*}
where we used integration by parts to establish the third equality. Since \begin{align*}
\|\nabla_x\eta(x-\cdot)\|_\infty=\varepsilon^{-(n+1)}\|\nabla\eta_1\|_\infty,
\end{align*}
we have that $|\nabla u_\varepsilon(x)-\nabla u(x)|\leq c(n,\eta_1)\varepsilon^{-1}(|R_x|)_{x,\varepsilon}=o(1)$ as $x$ is a point of $\lebe^1$--differentiability of $u$. In particular, $\nabla u_\varepsilon\rightarrow\nabla u$ $\mathcal{L}^n$--a.e., so that $\A u_\varepsilon\rightarrow A(\nabla u)$ $\mathcal{L}^n$--a.e. To establish \eqref{eq:structure_Au}, we will show that $\A u_\varepsilon\rightarrow\A^{ac}u$ $\mathcal{L}^n$--a.e. Using only that $u$ is a distribution, one easily shows that $\A u_\varepsilon=\A u*\eta_\varepsilon$, so that
\begin{align*}
\A u_\varepsilon(x)-\A^{ac}u(x)&=\A^{ac}u*\eta_\varepsilon(x)-\A^{ac}u(x)+\A^s u*\eta_\varepsilon(x)\\
&=\int_{\ball_\varepsilon(x)}\eta_\varepsilon(x-y)\left(\A^{ac}u(y)-\A^{ac}u(x)\right)\dif y \\
&+\int_{\ball_\varepsilon(x)}\eta_\varepsilon(x-y)\dif \A^s u(y).
\end{align*}
Using the fact that $\|\eta_\varepsilon(x-\cdot)\|_\infty=\varepsilon^{-n}\|\eta_1\|_\infty$ and Lebesgue differentiation, the proof is complete.
\end{proof}
\begin{remark}[Insufficiency of ellipticity]\label{rk:ell_insuff}
Consider $v$ as in \cite[Prop.~4.2]{ABC} with $n=2$. One shows by direct computation that $v\in\bv^{\partial}(\R^2)$, where the Wirtinger derivative 
\begin{align*}
\partial u:=\dfrac{1}{2}\left(\begin{array}{c}
\partial_1 u_1+\partial_2 u_2\\
\partial_2 u_1 -\partial_1 u_2
\end{array}\right)
\end{align*}
is easily seen to be elliptic (computation). However, it is shown in \cite[Rk.~4.5.(iv)]{ABC} that there are maps $v\in\bv^\partial(\R^2)$ which are not $\lebe^2$--differentiable.
\end{remark}
In turn, the stronger FDN condition is sufficient for $\lebe^{1^*}$--differentiability:
\begin{proof}[Proof of Theorem \ref{thm:main}]
Let $u\in\bv^\A_{\locc}(\R^n)$ and $x\in\R^n$ that is a Lebesgue point of $\A u$ such that 
\begin{align}\label{eq:L1_diff}
\fint_{\ball_r(x)}|u(y)-u(x)-\nabla u(x)(y-x)|\dif y=o(r)
\end{align}
as $r\downarrow0$. By Lemma \ref{lem:sub_crit} for $p=1$, such points exist $\mathcal{L}^n$--a.e. Here $\nabla u(x)$ denotes the approximate gradient of $u$ at $x$. We define $v(y):=u(y)-u(x)-\nabla u(x)(y-x)$ for $y\in\R^n$. We aim to show that
\begin{align}\label{eq:sharp_diff}
\left(\fint_{\ball_r(x)}|v(y)|^{\frac{n}{n-1}}\dif y\right)^{\frac{n-1}{n}}=o(r)
\end{align}
as $r\downarrow0$. Firstly, we remark that the integral in \eqref{eq:sharp_diff} is well--defined for $r>0$, as $v$ is the sum of an affine and a $\bv^\A_{\locc}$--map; the latter is $\lebe^{n/(n-1)}_{\locc}$--integrable, e.g., by \cite[Thm.~1.1]{GR}. Next, we abbreviate $\pi_r v:=\pi_{\ball_r(x)}v$ and use Proposition \ref{prop:poinc-sob} to estimate:
\begin{align*}
\left(\fint_{\ball_r(x)}|v|^{1^*}\dif y\right)^{\frac{1}{1^*}}
&\leq\left(\fint_{\ball_r(x)}|v-\pi_r v|^{1^*}\dif y\right)^{\frac{1}{1^*}}
+\left(\fint_{\ball_r(x)}|\pi_r v|^{1^*}\dif y\right)^{\frac{1}{1^*}}\\
&\leq c r\dfrac{|\A v|(\overline{\ball_r(x)})}{r^n}+\left(\fint_{\ball_r(x)}|\pi_r v|^{\frac{n}{n-1}}\dif y\right)^{\frac{n-1}{n}}=:\textbf{I}_r+\textbf{II}_r.
\end{align*}
To deal with $\textbf{I}_r$, first note that $\A v=\A u-A(\nabla u(x))$ (the latter term is obtained by classical differentiation of an affine map). By \eqref{eq:structure_Au}, we obtain $\A v=\A u-\A^{ac} u(x)$, so $\textbf{I}_r=o(r)$ as $r\downarrow0$ by Lebesgue differentiation for Radon measures. To bound $\textbf{II}_r$, by Lemma \ref{lem:FDNpoly}, we can use \eqref{eq:norm_equiv} to get that
\begin{align*}
\left(\fint_{\ball_r(x)}|P|^{\frac{n}{n-1}}\dif y\right)^{\frac{n-1}{n}}\leq c \fint_{\ball_r(x)}|P|\dif y,
\end{align*}
so that we have $\textbf{II}_r\leq c(|\pi_r v|)_{x,r}$. We claim that 
\begin{align}\label{eq:L1_proj}
\fint_{\ball_r(x)}|\pi_r v|\dif y\leq c\fint_{\ball_r(x)}|v|\dif y,
\end{align}
which suffices to conclude by \eqref{eq:L1_diff}, and \eqref{eq:sharp_diff}. Though elementary and essentially present in \cite[Sec.~3.1]{BDG}, the proof of \eqref{eq:L1_proj} is delicate and we present a careful argument. We write 
\begin{align*}
\pi_r v=\sum_{j=1}^d \langle v,e^r_j\rangle e^r_j,
\end{align*}
where the inner product is taken in $\lebe^2$ and $\{e_j^r\}_{j=1}^d$ is a (finite) orthonormal basis of $\ker\A\cap\lebe^2(\ball_r(x),V)$. By \eqref{eq:norm_equiv} and Cauchy--Schwarz inequality we have
\begin{align*}
\sup_{y\in\ball_r(x)}|e_j^r(y)|\leq c \left(\fint_{\ball_r(x)}|e_j^r|^2\dif y\right)^{\frac{1}{2}}=cr^{-\frac{n}{2}},
\end{align*}
so that
\begin{align*}
\fint_{\ball_r(x)}|\pi_r v|\dif y\leq \sum_{j=1}^d \fint_{\ball_r(x)}\int_{\ball_r(x)}|v|\dif z\dif y \|e_j^r\|^2_{\lebe^\infty(\ball_r(x),V)}\leq c r^{-n}\int_{\ball_r(x)}|v|\dif z,
\end{align*}
which yields \eqref{eq:L1_proj} and concludes the proof.
\end{proof}

\end{document}